\newtheorem{theorem}{Theorem}[section]
\newtheorem{lem}[theorem]{Lemma}
\newtheorem{corollary}[theorem]{Corollary}
\numberwithin{equation}{section}
\DeclareMathOperator{\ord}{ord}
\DeclareMathOperator{\Tr}{Tr}
\DeclareMathOperator{\Fil}{Fil}
\DeclareMathOperator{\Gal}{Gal}
\DeclareMathOperator{\Sel}{Sel}
\newcommand{\ZZ}{\mathbb{Z}}
\newcommand{\QQ}{\mathbb{Q}}
\newcommand{\FF}{\mathbb{F}}
\newcommand{\Zp}{\ZZ_p}
\newcommand{\Qp}{\QQ_p}
\newcommand{\Fp}{\FF_p}
\newcommand{\vp}{\varphi}
\newcommand{\Dcris}{\mathbb{D}_{\rm cris}(T_p(A))}
\begin{document}

\title{On the parity of supersingular Weil polynomials}

\begin{abstract}
Let $q$ be an odd power of a prime $p$ and let $A/\FF_q$ be a supersingular abelian variety of dimension $g$. We show that if $p>2g+1$, then the characteristic polynomial of the $q$-Frobenius is an even polynomial. This generalizes the well-known result on the vanishing of the trace of the $p$-Frobenius when $p>3$ for supersingular elliptic curves over $\FF_p$.
\end{abstract}

\author{David Ayotte}
\address{David Ayotte\newline
D\'epartement de Math\'ematiques et de Statistique\\
Universit\'e Laval, Pavillion Alexandre-Vachon\\
1045 Avenue de la M\'edecine\\
Qu\'ebec, QC\\
Canada G1V 0A6}
\email{david.ayotte.2@ulaval.ca}

\author{Antonio Lei}
\address{Antonio Lei\newline
D\'epartement de Math\'ematiques et de Statistique\\
Universit\'e Laval, Pavillion Alexandre-Vachon\\
1045 Avenue de la M\'edecine\\
Qu\'ebec, QC\\
Canada G1V 0A6}
\email{antonio.lei@mat.ulaval.ca}

\author{Jean-Christophe Rondy-Turcotte}
\address{Jean-Christophe Rondy-Turcotte\newline
D\'epartement de Math\'ematiques et de Statistique\\
Universit\'e Laval, Pavillion Alexandre-Vachon\\
1045 Avenue de la M\'edecine\\
Qu\'ebec, QC\\
Canada G1V 0A6}
\email{jean-christophe.rondy-turcotte.1@ulaval.ca}

\keywords{Abelian varieties, supersingular primes, cyclotomic polynomials, Weil polynomials}
\subjclass[2010]{11C08 (primary); 11G10 (secondary) }

\thanks{This research was carried out during the summer 2015, supported by an ISM summer scholarship (Ayotte) and a FGS bursary from Universit\'e Laval  (Rondy-Turcotte). The second-named authors' research is supported by the Discovery Grants Program 05710 of NSERC}
\maketitle

\section{Introduction} 

Let $E/\FF_p$ be an elliptic curve and let $T_\ell(E)$ be its Tate module at a  prime $\ell\ne p$. The characteristic polynomial of the $p$-Frobenius action  on $T_\ell(E)$ is of the form $P_E(X)=X^2-aX+p$, where $a$ is the trace of the Frobenius, which is equal to the integer $1+p-\# E(\Fp)$. In particular, it is independent of $\ell$. If furthermore $E$ is supersingular, then $p|a$. In this case, it is well-known that  $a$ is forced to be $0$ when $p>3$. This is a consequence of the Weil's bound which states that $|a|\le 2\sqrt{p}$. Indeed, assume that $a\ne0$. Then, the fact that $p|a$ implies that $|a|\ge p$. On combining this with the Weil's bound, we have $p\le 2\sqrt{p}$, which is equivalent to $p\le 4$.

We remark that the vanishing of $a$ has important consequences in Iwasawa Theory. For example, in \cite{kobayashi,iovitapollack}, the symmetry of the action of the Frobenius allows us to define plus and minus Selmer groups for  elliptic curves defined over $\QQ$ with supersingular reduction at a prime $p$. Consequently, we may formulate a main conjecture in terms of plus and minus $p$-adic $L$-functions defined in \cite{pollack}.

The goal of this article is to generalize the result "$p$ sufficiently large $\implies a=0$" to abelian varieties of higher dimensions. Let $q=p^n$ be a power of prime and let  $A/\FF_q$ be a $g$-dimensional supersingular abelian variety. Let $T_\ell(A)$ be its Tate module at a prime $\ell\ne p$. Let $P_A(X)$  be the characteristic polynomial of the $q$-Frobenius action  on $T_\ell(A)$. It is of the form
\[
P_A(X)=X^{2g}+a_1X^{2g-1}+\cdots +a_{g-1}X^{g+1}+a_gX^g+qa_{g-1}X^{g-1}+\cdots +q^{g-1}a_1X+q^g,
\]
where $a_1,\ldots,a_g\in\ZZ$. The first question we have to answer is, what is the right condition on $P_A(X)$  that would generalize the condition "$a=0$" for elliptic curves? On studying the list of supersingular Weil polynomials in \cite{singh}, we observe that when $n$ is odd and $g\le 7$, $P_A(X)$ is always an even polynomial (i.e. $a_1=a_3=\cdots =0$) for $p$ sufficiently large. This suggests that the right condition to replace "$a=0$" is $P_A(X)$ being an even polynomial. Indeed, on using a theorem of Manin-Oort which says that all roots of  $P_A(X)$ are of the form $\sqrt{q}\times\zeta$, where $\zeta$ is a root of unity, we may show that 
\begin{enumerate}
\item $|a_i|\le \binom{2g}{i} q^{i/2}$;
\item $\ord_p(a_i)\ge n\times i/2$.
\end{enumerate}
Here, (1) can be considered as generalized Weil's bounds, whereas (2) generalizes the condition "$p|a$" for elliptic curves. Suppose that $n$ is an odd integer. Combining (1) and (2), we may show as in the elliptic curve case that if $p>\binom{2g}{g}^2$ (when $g$ is odd) or $p>\binom{2g}{g-1}^2$ (when $g$ is even), then $a_i=0$ for all odd $i$. We give details of this proof in the appendix of this article.

Note that the bound $\binom{2g}{g}$ grows very rapidly as $g$ increases. For example, when $g=3$, $P_A(X)$ is even whenever $p>5$ (c.f. \cite[Theorem~12.1]{singh} as well as \cite{MN,xing}). However, $\binom{2g}{g}=20$ in this case. It is therefore desirable to find a smaller bound $C_g$ such that $p>C_g$ would imply that $P_A(X)$ is even for all supersingular abelian varieties over $\FF_{p^n}$  with $n$ being odd. In \S\ref{proof} below, we shall show the following theorem.
\begin{theorem}\label{thm:main}
Let $q=p^n$, where $p$ is a prime number and $n\ge1$ is an odd integer. Let $A/\FF_q$ be a supersingular abelian variety of dimension $g$. If $p>2g+1$, then $P_A(X)$ is an even polynomial.
\end{theorem}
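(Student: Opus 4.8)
The plan is to pass to the $p$-adic world and produce a single element of $\Gal(\overline{\Qp}/\Qp)$ that negates every root of $P_A(X)$ at once; Galois stability of the root multiset will then force $P_A(-X)=P_A(X)$. To set up, recall that by the Manin--Oort theorem every root of $P_A(X)$ has the form $\alpha=\sqrt q\,\zeta$ with $\zeta$ a root of unity, and since $n$ is odd I may write $\sqrt q=p^{(n-1)/2}\sqrt p$ with $p^{(n-1)/2}\in\ZZ$. Fix an embedding $\overline{\QQ}\hookrightarrow\overline{\Qp}$ together with a choice of $\sqrt p\in\overline{\Qp}$, so that each root $\alpha$ becomes an explicit element $p^{(n-1)/2}\sqrt p\,\zeta$ of $\overline{\Qp}$.

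The hypothesis $p>2g+1$ enters through a bound on the order $m$ of each $\zeta$. From $\zeta^2=\alpha^2/q$ one gets $\QQ(\zeta^2)\subseteq\QQ(\alpha)$, and as the minimal polynomial of $\alpha$ divides $P_A(X)$ this gives $\varphi(m')\le[\QQ(\alpha):\QQ]\le 2g$, where $m'=m/\gcd(m,2)$ is the order of $\zeta^2$. If $p\mid m$, then since $p$ is odd also $p\mid m'$, whence $p-1\mid\varphi(m')$ and $\varphi(m')\ge p-1>2g$, a contradiction. Thus $p\nmid m$ for every root of unity occurring, so each such $\zeta$ lies in the maximal unramified extension $\Qp^{\mathrm{ur}}$.

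Next I would build the negating automorphism. As $p$ is odd, $X^2-p$ is Eisenstein, so $\Qp(\sqrt p)/\Qp$ is totally (tamely) ramified of degree $2$ and $\sqrt p\notin\Qp^{\mathrm{ur}}$; hence $\Qp^{\mathrm{ur}}(\sqrt p)/\Qp^{\mathrm{ur}}$ is a nontrivial quadratic extension whose nontrivial automorphism $\sqrt p\mapsto-\sqrt p$ fixes $\Qp^{\mathrm{ur}}$ pointwise. Extending it gives $\sigma\in\Gal(\overline{\Qp}/\Qp^{\mathrm{ur}})\subseteq\Gal(\overline{\Qp}/\Qp)$ which fixes every $\zeta$ (these lying in $\Qp^{\mathrm{ur}}$ by the previous step), so for each root $\sigma(\alpha)=p^{(n-1)/2}(-\sqrt p)\zeta=-\alpha$. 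Since $P_A(X)\in\ZZ[X]\subseteq\Qp[X]$ and $\sigma$ fixes $\Qp$, the map $\sigma$ permutes the roots with multiplicity; as it carries each $\alpha_i$ to $-\alpha_i$, the root multiset satisfies $\{-\alpha_i\}=\{\alpha_i\}$, and therefore $P_A(-X)=\prod_i(-X-\alpha_i)=(-1)^{2g}\prod_i(X+\alpha_i)=\prod_i\bigl(X-(-\alpha_i)\bigr)=P_A(X)$, i.e. $P_A$ is even.

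I expect the crux to be the order bound of the second step: the entire argument collapses unless no $\zeta$ has order divisible by $p$, and it is exactly the comparison of $\varphi(m')\ge p-1$ (when $p\mid m'$) against $\varphi(m')\le 2g$ that both guarantees this and pins down the sharp threshold $p>2g+1$. By contrast, the construction of $\sigma$ should be a routine ramification computation, resting only on the fact that the ramified quadratic extension $\Qp(\sqrt p)$ meets the unramified field generated by the $\zeta$'s trivially.
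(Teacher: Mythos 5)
Your argument is correct, and it reaches the conclusion by a genuinely different route from the paper. The paper factors $P_A(X)$ into minimal polynomials of the roots $\sqrt{q^*}\zeta_{4t}$ and invokes Singh's explicit description of these minimal polynomials as $(\sqrt{q^*})^{\phi(4t)}\Phi_{4t}(X/\sqrt{q^*})$ in the ``full degree case,'' together with a separate parity theorem for cyclotomic polynomials ($\Phi_m$ is even if and only if $4\mid m$); the hypothesis $p>2g+1$ is used to rule out the ``half degree case'' via exactly the numerical comparison you isolate, namely that $p\mid t$ would force $p-1\mid\phi(t)\le 2g$. You bypass Singh's classification and the cyclotomic computation entirely: after the same degree-versus-$\phi$ comparison shows that no $\zeta$ has order divisible by $p$, you exploit that all the $\zeta$'s then live in $\Qp^{\mathrm{ur}}$ while $\sqrt p$ generates a ramified quadratic extension, so a single element of $\Gal(\overline{\Qp}/\Qp^{\mathrm{ur}})$ negates every root simultaneously and Galois stability of the root multiset gives $P_A(-X)=P_A(X)$. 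Each detail checks out: $\zeta^2=\alpha^2/q$ does put $\QQ(\zeta^2)$ inside $\QQ(\alpha)$, the passage from the order $m$ of $\zeta$ to the order $m'$ of $\zeta^2$ preserves divisibility by the odd prime $p$, and $\Qp(\sqrt p)\cap\Qp^{\mathrm{ur}}=\Qp$ because the former is totally ramified. Your proof is more self-contained, needing only Manin--Oort plus standard local ramification theory; what the paper's approach buys in exchange is finer structural information, namely the explicit factorization of $P_A(X)$ into scaled cyclotomic polynomials $\Phi_{4t}$, which is of independent use (for instance in the Iwasawa-theoretic applications sketched in the final section), and the stand-alone parity criterion for $\Phi_m$.
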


In other words, we dramatically decrease the binomial bound to a linear bound.

We remark that our theorem does not hold without the condition that $n$ is odd. Indeed, if $q=p^{2m}$  for some $m\ge1$, then \cite[Theorem~12.2]{singh} tells us that $P_A(X)$ is not necessarily even for $p$ arbitrarily large. If we do not insist $A$ to be simple, we may take $P_A(X)=(X\pm p^m)^{2g}$ for example.

The structure of the article is as follows. We study an elementary result on cyclotomic polynomials in \S\ref{cyclo}. We then review an important result from \cite{singh} and give the details of the proof of Theorem~\ref{thm:main} in \S\ref{proof}. In \S\ref{iwasawa}, we briefly discuss some potential implications of our result for the Iwasawa theory of supersingular abelian varieties. Finally, in the appendix, we give a proof for a weaker version of Theorem~\ref{thm:main} involving binomial coefficients as discussed above.
\section*{Acknowledgement}
The second-named author would like to thank Kazim Buyukboduk for many interesting discussions on supersingular abelian varieties and related subjects. Thanks are also due to the anonymous referee for their constructive comments that led to significant improvements on the presentation of the article.
\section{The parity of cyclotomic polynomials} \label{cyclo}
Before we prove the main theorem of this section, we recall an elementary lemma on cyclotomic polynomials. 
\begin{lem}\label{lem:cyclo2}
  Let $p$ be a prime number and $n$, $k$ be positives integers. Then,
    \[
      \Phi_{p^kn}(X) = \begin{cases}
                         \Phi_n(X^{p^k}) &\text{ if } p \mid n;\\
                         \frac{\Phi_n(X^{p^k})}{\Phi_n(X^{p^{k-1}})} &\text{ if } p \nmid n.\\
                       \end{cases}
  \]
\end{lem}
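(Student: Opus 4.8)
The plan is to prove the identity by comparing the complex roots of the two sides. Over $\mathbb{C}$ the roots of $\Phi_m(X)$ are exactly the primitive $m$-th roots of unity, and $\Phi_m$ is separable; moreover $\Phi_n(X^{p^j})$ is separable as well, since its roots are the $p^j$-th roots of the (distinct) primitive $n$-th roots of unity, which form $\varphi(n)$ disjoint sets of $p^j$ distinct points each. Thus every polynomial in sight is monic with simple roots, and it suffices to check that the two sides vanish at the same set of roots of unity (and, in the second case, that the displayed quotient is genuinely a polynomial).

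The engine of the proof is a single order computation. If $\zeta$ is a root of unity with $\ord(\zeta) = p^a d'$ and $p \nmid d'$, then $\ord(\zeta^{p^j}) = \ord(\zeta)/\gcd(\ord(\zeta), p^j) = p^{a - \min(a,j)} d'$. I would first run this when $p \mid n$: writing $n = p^b m$ with $p \nmid m$ and $b \ge 1$, the requirement that $\zeta^{p^k}$ be a primitive $n$-th root of unity forces $d' = m$ and $a - \min(a,k) = b$; since $b \ge 1$ this rules out $a \le k$, so $a = k + b$ and $\ord(\zeta) = p^k n$. Hence the roots of $\Phi_n(X^{p^k})$ are precisely the primitive $p^k n$-th roots of unity, which settles the first case.

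For $p \nmid n$ the same computation shows that $\zeta^{p^j}$ is a primitive $n$-th root of unity exactly when $\ord(\zeta) = p^a n$ with $0 \le a \le j$. Since $p \nmid n$, raising a primitive $n$-th root to the $p$-th power gives another primitive $n$-th root, so every root of $\Phi_n(X^{p^{k-1}})$ is also a root of $\Phi_n(X^{p^k})$; this guarantees the quotient is a polynomial. Taking $j = k$ and $j = k-1$ and subtracting the two root sets leaves exactly those $\zeta$ with $\ord(\zeta) = p^k n$, i.e. the primitive $p^k n$-th roots of unity, which are the roots of $\Phi_{p^k n}(X)$. Comparing with $\Phi_{p^k n}$ finishes the second case.

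The main obstacle here is bookkeeping rather than depth: one must track the $p$-adic valuations carefully in the order computation, and in the second case justify that the denominator divides the numerator so that the right-hand side really is a polynomial. An alternative I would keep in reserve is to establish first the base case $k = 1$ (the classical formula for $\Phi_{pn}$) and then induct on $k$, repeatedly substituting $X \mapsto X^p$ and using that $\Phi_{pm}(X) = \Phi_m(X^p)$ whenever $p \mid m$; the inductive step is short but still requires separating the two divisibility cases exactly as above.
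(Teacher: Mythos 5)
Your proof is correct and complete. Note that the paper does not actually prove this lemma --- it records it as a classical fact and refers to \cite{Yimin} --- so there is no in-paper argument to compare against; your root-comparison proof is the standard one and is self-contained. The key points are all in place: the order computation $\ord(\zeta^{p^j})=\ord(\zeta)/\gcd(\ord(\zeta),p^j)$ correctly isolates the primitive $p^kn$-th roots of unity in each case, the separability of $\Phi_n(X^{p^j})$ justifies comparing monic polynomials by their root sets alone, and in the case $p\nmid n$ you verify that the denominator's roots all occur among the numerator's roots (using that the $p$-th power of a primitive $n$-th root of unity is again primitive when $p\nmid n$), so the displayed quotient is genuinely a polynomial. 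One could add the degree check $p^k\varphi(n)=\varphi(p^kn)$ when $p\mid n$ and $p^{k-1}(p-1)\varphi(n)=\varphi(p^kn)$ when $p\nmid n$ as a sanity check, but it is not logically needed once the root sets are matched and separability is established.
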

This is a classical result. A proof can be found in \cite[Lemma~5]{Yimin} for example.
\begin{theorem}\label{thm:cyclo}
  Let $\Phi_n(X)$ be the $n^{th}$ cyclotomic polynomial. Then, $\Phi_n(X)$ is an even polynomial, i.e. $\Phi_n(X) = \Phi(-X)$, if and only if $n$ is divisible by $4$.
\end{theorem}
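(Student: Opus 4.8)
The plan is to prove the two implications separately, organised around the elementary observation that a monic polynomial $f$ is even if and only if its degree is even \emph{and} its multiset of roots is stable under $\alpha\mapsto-\alpha$: indeed, if the roots are symmetric under negation then $f(-X)=(-1)^{\deg f}\prod_\alpha\bigl(X-(-\alpha)\bigr)=(-1)^{\deg f}f(X)$, and matching leading coefficients forces $\deg f$ even. Since the roots of $\Phi_n(X)$ are precisely the primitive $n$-th roots of unity, the entire statement reduces to deciding when $\zeta\mapsto-\zeta$ permutes the primitive $n$-th roots, i.e. to computing the order of $-\zeta$.

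For the ``if'' direction I would feed $p=2$ into Lemma~\ref{lem:cyclo2}. Write $n=2^a m$ with $m$ odd and $a\ge 2$. Since $2\nmid m$, the lemma gives
\[
\Phi_n(X)=\frac{\Phi_m(X^{2^a})}{\Phi_m(X^{2^{a-1}})}.
\]
As $a\ge 2$, both exponents $2^a$ and $2^{a-1}$ are even, so the numerator $N(X)$ and denominator $D(X)$ are polynomials in $X^2$, hence even. Substituting $-X$ then yields $\Phi_n(-X)=N(-X)/D(-X)=N(X)/D(X)=\Phi_n(X)$, so $\Phi_n$ is even. This half is clean and uses the supplied recursion directly, with no root computation.

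For the ``only if'' direction I would argue contrapositively: assuming $4\nmid n$, I show $\Phi_n$ is not even. The degenerate cases are immediate, as $\Phi_1=X-1$ and $\Phi_2=X+1$. For $n\ge 3$ with $4\nmid n$ I use the characterization above: if $\Phi_n$ were even then $-\zeta$ would be a primitive $n$-th root for every primitive $n$-th root $\zeta$, i.e. $-\zeta$ would have order exactly $n$. The key computation is that this fails precisely here. When $n$ is odd, $-1$ and $\zeta$ have coprime orders $2$ and $n$, so $-\zeta$ has order $\operatorname{lcm}(2,n)=2n\ne n$. When $n\equiv 2\pmod 4$, write $n=2m$ with $m$ odd; then $-1=\zeta^{\,m}$, so $-\zeta=\zeta^{\,m+1}$ with $m+1$ even, forcing $\gcd(m+1,2m)\ge 2$ and hence an order strictly smaller than $n$ (in fact $n/2$). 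In either case $-\zeta$ is not a primitive $n$-th root, the root set is not symmetric, and $\Phi_n$ is not even. (An equivalent route for this direction is to invoke the classical identity $\Phi_{2m}(X)=\Phi_m(-X)$ for odd $m>1$, which shows $\Phi_{2m}(-X)=\Phi_m(X)\ne\Phi_{2m}(X)$ and, applied to odd $n$, that $\Phi_n(-X)=\Phi_{2n}(X)\ne\Phi_n(X)$.)

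I expect the main obstacle to be the order-of-$(-\zeta)$ bookkeeping in the converse, namely isolating the role of the $2$-adic valuation of $n$ and verifying that $a\ge 2$ is exactly what makes $n/2+1$ coprime to $n$; the small cases $n\in\{1,2\}$, where $\varphi(n)$ is odd, must also be dispatched by hand so that the ``degree even'' condition is not silently assumed. The ``if'' direction, by contrast, should follow immediately from Lemma~\ref{lem:cyclo2} once the reduction to a quotient of polynomials in $X^2$ is in place.
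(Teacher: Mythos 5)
Your proof is correct and takes essentially the same route as the paper: the ``if'' direction applies Lemma~\ref{lem:cyclo2} with $p=2$ exactly as in the text, and the converse is the same root-symmetry argument, showing $-\zeta$ cannot be a primitive $n$-th root when $4\nmid n$. The only cosmetic difference is that in the case $n\equiv 2\pmod 4$ you compute the order of $-\zeta$ directly, whereas the paper invokes $\Phi_{2m}(X)=\Phi_m(-X)$ to reduce to the odd case --- a variant you yourself note as equivalent.
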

\begin{proof}
  First, suppose that $4 \mid n$. We may write $n = 2^km$ for $k \in \ZZ$, $k\geqslant2$ and $2 \nmid m$. From Lemma~\ref{lem:cyclo2},
  \begin{align*}
    \Phi_{2^km}(-X) &= \frac{\Phi_m((-X)^{2^k})}{\Phi_m((-X)^{2^{k-1}})} \\
                    &= \frac{\Phi_m(X^{2^k})}{\Phi_m(X^{2^{k-1}})} & \left(\text{because }k\geqslant2\right)\\
                    &= \Phi_{2^km}(X) \\
  \end{align*}
  and hence $\Phi_n(X) = \Phi_n(-X)$.
  
  Conversely, we show that if $4\nmid n$ then $\Phi_n(X)$ is not even. We consider two cases: (i) $2\nmid n$ and (ii) $2 \mid n$ but $4\nmid n$.
  \begin{enumerate}[(i)]
    \item Suppose that $2\nmid n$ and that $\Phi_n(X)$ is even. Let $\zeta$ be a root of $\Phi_n(X)$, then $-\zeta$ is a root too. Thus, there exists a root $\zeta'$ of $\Phi_n(X)$ such that $\zeta' = -\zeta$. Then,
  \begin{align*}
    (\zeta')^n &= (-\zeta)^n\\
            1 &= -1 &(\text{as $n$ is an odd integer}),
  \end{align*}
  which is obviously a contradiction. Hence $\Phi_n(X)$ cannot be even.
  
  \item Suppose that $n=2m$, where $m$ is an odd integer. Recall that $\Phi_{2m}(X) = \Phi_m(-X)$  (a proof  can be found in various places, for example \cite[Chapter 8]{Garrett} or \cite[Lemma 1.3]{ThR}). Hence,  by case (i),  $\Phi_n(X)$ is not even.
  \end{enumerate}  
\end{proof}

\section{Proof of Theorem~\ref{thm:main}}\label{proof}

Let $q=p^n$ be a positive odd power of a prime number $p$ and let $A/\FF_q$ be a $g$-dimensional supersingular abelian variety. Let $P_A(X)$ be the characteristic polynomial of the Frobenius endomorphism on the $\ell$-adic Tate module where $\ell$ is a prime distinct from $p$. We recall the following results regarding the roots of $P_A(X)$.

\begin{theorem}[Manin-Oort]\label{thm:maninoort}
All roots of $P_A(X)$ are of the form $\sqrt{q}\times\zeta$, where $\zeta$ is a root of unity.
\end{theorem}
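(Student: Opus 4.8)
The plan is to deduce the statement from the standard characterization of supersingularity in terms of $p$-adic slopes, together with Kronecker's theorem on algebraic integers lying on the unit circle. Write $\alpha_1,\dots,\alpha_{2g}$ for the roots of $P_A(X)$. Since $P_A$ is monic with integer coefficients, each $\alpha_i$ is an algebraic integer, and the Weil estimates for abelian varieties over finite fields give $|\sigma(\alpha_i)|=\sqrt q$ for every complex embedding $\sigma$. The functional equation $X^{2g}P_A(q/X)=q^gP_A(X)$ shows that $q/\alpha_i$ is again a root of $P_A$, and the Weil bound identifies it with $\overline{\alpha_i}$ under each embedding.

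First I would introduce $\beta_i:=\alpha_i^2/q$ and check that it is a unit at every finite place. At a place $w$ lying above a rational prime $\ell\ne p$ one has $v_w(q)=0$, so the relation $\alpha_i\cdot(q/\alpha_i)=q$ with both factors integral forces $v_w(\alpha_i)=0$ and hence $v_w(\beta_i)=0$. At a place $w$ above $p$, normalized so that $v_w(q)=1$, the hypothesis that $A$ is \emph{supersingular}---that is, that its Newton polygon is isoclinic of slope $1/2$---gives $v_w(\alpha_i)=1/2$, whence $v_w(\beta_i)=2v_w(\alpha_i)-v_w(q)=0$.

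Consequently $\beta_i$ has valuation $0$ at every finite place, so it is an algebraic integer that is moreover a unit, and by the Weil bound each of its conjugates satisfies $|\sigma(\beta_i)|=|\sigma(\alpha_i)|^2/q=1$. Kronecker's theorem then forces $\beta_i$ to be a root of unity. Setting $\zeta_i:=\alpha_i/\sqrt q$, we have $\zeta_i^2=\beta_i$, so $\zeta_i$ is a square root of a root of unity and is therefore itself a root of unity; this expresses $\alpha_i=\sqrt q\,\zeta_i$ in the required form.

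The hard part will be the input rather than the output: the whole argument rests on translating the geometric notion of supersingularity into the arithmetic statement that every Frobenius slope equals $1/2$. This equivalence is the genuine content of the Manin--Oort description of supersingular abelian varieties and is where the depth of the theorem resides; granting it, the passage through $\beta_i$ and Kronecker's theorem is elementary. One must also take care to invoke the Weil conjecture for abelian varieties to secure $|\alpha_i|=\sqrt q$, as well as the unit property away from $p$, before Kronecker's theorem can be applied.
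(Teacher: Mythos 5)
The paper does not prove Theorem~\ref{thm:maninoort}; it is quoted as a classical result of Manin and Oort and used as a black box, so there is no internal proof to compare against. Your argument is the standard derivation of the statement and is correct as far as it goes: away from $p$ the relation $\alpha_i\cdot(q/\alpha_i)=q$ between two algebraic integers forces $\alpha_i$ to be a unit; above $p$ the isoclinic slope-$1/2$ condition gives $v_w(\alpha_i)=\tfrac12 v_w(q)$; hence $\beta_i=\alpha_i^2/q$ is an algebraic integer, all of whose conjugates lie on the unit circle by the Riemann hypothesis for abelian varieties, and Kronecker's theorem finishes the job. Two small remarks. First, Kronecker's theorem only needs $\beta_i$ to be an algebraic integer with all archimedean conjugates of absolute value $1$; the unit property at the finite places is what you actually need to establish integrality (this is precisely where supersingularity enters, since for an ordinary variety half the slopes vanish and $\alpha_i^2/q$ fails to be integral), so your phrasing ``unit at every finite place'' is doing the right work even if the unit conclusion itself is not what Kronecker consumes. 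Second, you correctly identify that the genuine depth is the equivalence between the geometric definition of supersingularity (isogenous over $\overline{\FF}_q$ to a power of a supersingular elliptic curve) and the purely $p$-adic statement that the Newton polygon is isoclinic of slope $1/2$; if one adopts the slope condition as the definition, your proof is complete and elementary, and if one adopts the geometric definition, that equivalence is Oort's contribution and must be cited rather than reproved.
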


As discussed in \cite[\S3, right after Remark~3.2]{singh}, if $\theta=\sqrt{q}\times\zeta$,  we may rewrite it as
$\sqrt{q^*}\times\zeta_{4t}$, where $q^*\in\{q,-q\}$ and $\zeta_{4t}$ is a $4t$-th primitive root of unity. Furthermore, the following theorem, which is Theorem~3.3 of \textit{op. cit.}, describes the minimal polynomial of such elements.

\begin{theorem}\label{thm:singh}
Let  $\theta=\sqrt{q^*}\times\zeta_{4t}$, where $q^*=\pm p^n$ for some positive odd integer $n$ and $\zeta_{4t}$ is a $4t$-th primitive root of unity. If either
\begin{enumerate}[(a)]
\item $q^*$ is odd and
\begin{enumerate}[(i)]
\item $t$ is even or
\item $p\nmid t$ or
\item $q^*\equiv 1\mod 4$ or
\end{enumerate}
\item $q^*$ is even and $t\not\equiv 2\mod4$,
\end{enumerate}
then the minimal polynomial  of $\theta$ over $\QQ$ is equal to 
\[
(\sqrt{q^*})^{\phi(4t)}\Phi_{4t}(X/\sqrt{q^*}),
\]
where $\phi$ is the Euler's totient function. 
\end{theorem}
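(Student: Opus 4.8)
The plan is to show directly that $f(X):=(\sqrt{q^*})^{\phi(4t)}\Phi_{4t}(X/\sqrt{q^*})$ is the minimal polynomial of $\theta$ over $\QQ$, in two stages: first that $f$ is a monic degree-$\phi(4t)$ polynomial with integer coefficients having $\theta$ as a root, and second that $[\QQ(\theta):\QQ]=\phi(4t)$, which forces $f$ to be irreducible and hence (being monic) equal to the minimal polynomial. That $f(\theta)=0$ is immediate, since $\theta/\sqrt{q^*}=\zeta_{4t}$ is a root of $\Phi_{4t}$, and $f$ is monic of degree $\deg\Phi_{4t}=\phi(4t)$. For rationality, the coefficient of $X^j$ in $f$ equals $c_j(\sqrt{q^*})^{\phi(4t)-j}$, where $c_j\in\ZZ$ is the coefficient of $X^j$ in $\Phi_{4t}$. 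Since $4\mid 4t$, Theorem~\ref{thm:cyclo} tells us that $\Phi_{4t}$ is even, so $c_j=0$ for odd $j$; as $\phi(4t)$ is also even, the exponent $\phi(4t)-j$ is even whenever $c_j\ne0$, giving $(\sqrt{q^*})^{\phi(4t)-j}=(q^*)^{(\phi(4t)-j)/2}\in\ZZ$. Hence $f\in\ZZ[X]$.

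For the degree I would use the tower $\QQ\subseteq\QQ(\theta^2)\subseteq\QQ(\theta)$. Because $\theta^2=q^*\zeta_{4t}^2=q^*\zeta_{2t}$ with $q^*\in\QQ^\times$, we have $\QQ(\theta^2)=\QQ(\zeta_{2t})$, of degree $\phi(2t)$; and an elementary computation with the totient gives $\phi(4t)=2\phi(2t)$. It therefore suffices to prove that $[\QQ(\theta):\QQ(\zeta_{2t})]=2$, i.e.\ that $\theta\notin L:=\QQ(\zeta_{2t})$, equivalently that $q^*\zeta_{2t}$ is not a square in $L$. This is the heart of the matter, and is exactly where the hypotheses (a)(i)--(iii) and (b) must enter.

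To establish $\theta\notin L$ I would combine two complementary techniques. The first is a ramification argument: if $p\nmid 2t$ then $p$ is unramified in $L$, so for a prime $\mathfrak p\mid p$ we have $v_{\mathfrak p}(q^*\zeta_{2t})=v_{\mathfrak p}(\pm p^n)=n$, which is odd, and a square would have even valuation everywhere. This disposes of case (a)(ii), and also of the subcase $t$ odd of (b), where $L=\QQ(\zeta_t)$ and $2$ is unramified. The second technique is a descent through $\sqrt{q^*}$: whenever $\sqrt{q^*}\in L$, the relation $\zeta_{4t}=\theta/\sqrt{q^*}$ shows that $\theta\in L$ would force $\zeta_{4t}\in L$, contradicting $[\QQ(\zeta_{4t}):L]=2$; hence $\theta\notin L$. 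I would then verify $\sqrt{q^*}\in L$ in the remaining cases using the classical quadratic subfields of cyclotomic fields, namely $\QQ(\sqrt{p^*})\subseteq\QQ(\zeta_p)$ with $p^*=(-1)^{(p-1)/2}p$, and $\sqrt{2},\,i,\,\sqrt{-2}\in\QQ(\zeta_8)$. Concretely, in the delicate situation $p\mid t$ one checks that (a)(iii) is equivalent to the squarefree part of $q^*$ being exactly $p^*$, so that $\sqrt{q^*}\in\QQ(\zeta_p)\subseteq L$; that (a)(i) supplies $i\in L$ (as $4\mid 2t$) together with $\sqrt{p^*}\in L$, again yielding $\sqrt{q^*}\in L$; and that in case (b) the condition $t\equiv0\bmod4$ gives $8\mid 2t$, so $\sqrt{\pm2}\in\QQ(\zeta_8)\subseteq L$.

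The step I expect to be hardest is precisely this final casework deciding when $q^*\zeta_{2t}$ is a square in $L$. The obstruction is that exactly when $p\mid t$ (for $p$ odd) or $t\equiv2\bmod4$ (for $p=2$) the relevant $p$-adic valuation of $q^*\zeta_{2t}$ is \emph{even}, so the ramification argument gives nothing and one must produce $\sqrt{q^*}$ explicitly inside $L$; getting the arithmetic of $p^*$ and the reformulation of (a)(iii) right is the most error-prone part. It is reassuring that the single configuration in which both techniques fail, namely $p=2$ with $t\equiv2\bmod4$, is exactly the case excluded by the hypothesis in (b), which also explains why the evenness conclusion can genuinely break down there.
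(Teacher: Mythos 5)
The paper does not prove this statement at all: it is imported verbatim as Theorem~3.3 of \cite{singh}, so there is no internal proof to compare against. Judged on its own terms, your argument is correct and complete in outline, and it is the natural proof of the result. The reduction is sound: $f(X)=(\sqrt{q^*})^{\phi(4t)}\Phi_{4t}(X/\sqrt{q^*})$ is monic in $\ZZ[X]$ (your parity observation using $4\mid 4t$ and the evenness of $\Phi_{4t}$ is exactly what makes the coefficients rational), it kills $\theta$, and the tower $\QQ\subseteq\QQ(\zeta_{2t})\subseteq\QQ(\theta)$ together with $\phi(4t)=2\phi(2t)$ reduces everything to showing $q^*\zeta_{2t}$ is not a square in $L=\QQ(\zeta_{2t})$. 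Your two mechanisms — odd $\mathfrak p$-adic valuation when $p$ is unramified in $L$, and the descent $\sqrt{q^*}\in L\Rightarrow\zeta_{4t}\in L$, contradicting $[\QQ(\zeta_{4t}):L]=2$ — do cover all the listed hypotheses: (a)(ii) and the $t$ odd part of (b) by ramification, and the cases $p\mid t$ with (a)(i) or (a)(iii), and $4\mid t$ in (b), by exhibiting $\sqrt{q^*}$ in $L$ via $\sqrt{p^*}\in\QQ(\zeta_p)$, $i\in\QQ(\zeta_4)$ and $\sqrt{\pm2}\in\QQ(\zeta_8)$. The one computation worth writing out in full is the equivalence of (a)(iii) with the squarefree part of $q^*$ equalling $p^*$: since $n$ is odd, $q^*=(\pm p)(p^{(n-1)/2})^2$ and $p^{n-1}\equiv1\bmod 4$, so $q^*\equiv\pm p\bmod 4$ and $q^*\equiv1\bmod 4$ iff $\pm p=p^*$; this is exactly as you say. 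Your closing remark correctly identifies why $p=2$, $t\equiv2\bmod4$ is the genuinely excluded configuration. No gaps.
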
   
In \cite{singh}, when either (a) or (b) is satisfied, it is called the \textbf{\textit{full degree case}}. Otherwise, it is called the\textbf{ \textit{half degree case}}, for which the minimal polynomial of $\theta$ is also described explicitly in \textit{op. cit.}. But we shall not need this here.

\begin{lem}\label{lem:exceptional}
Suppose that $p>2g+1$, then the minimal polynomial of $\theta$ is an even polynomial.
\end{lem}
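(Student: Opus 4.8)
The plan is to show that the hypothesis $p>2g+1$ forces $\theta$ to fall into the full degree case of Theorem~\ref{thm:singh}; once this is established, the evenness of the minimal polynomial follows at once from Theorem~\ref{thm:cyclo}. Indeed, in the full degree case the minimal polynomial of $\theta$ equals $(\sqrt{q^*})^{\phi(4t)}\Phi_{4t}(X/\sqrt{q^*})$, and since $t\geq 1$ we always have $4\mid 4t$, so $\Phi_{4t}$ is an even polynomial. The entire difficulty is thus to rule out the half degree case.

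First I would reduce the number of cases. Since $p>2g+1\geq 3$, the prime $p$ is odd, and therefore $q^*=\pm p^n$ is odd as well. This rules out case (b) of Theorem~\ref{thm:singh}, so it remains only to verify that condition (a) holds; concretely, I would aim to verify sub-condition (ii), namely that $p\nmid t$.

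Proving the claim $p\nmid t$ is the step I expect to be the main obstacle, and I would attack it by a degree count. Since $\theta$ is a root of $P_A(X)$, which has integer coefficients and degree $2g$, the minimal polynomial of $\theta$ over $\QQ$ divides $P_A(X)$, so $[\QQ(\theta):\QQ]\leq 2g$. On the other hand, $\theta^2=q^*\zeta_{4t}^2$ and $\zeta_{4t}^2$ is a primitive $2t$-th root of unity, so $\zeta_{4t}^2=\theta^2/q^*$ lies in $\QQ(\theta)$; hence $\QQ(\theta)$ contains the $2t$-th cyclotomic field, giving $\phi(2t)\leq[\QQ(\theta):\QQ]\leq 2g$. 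Now suppose for contradiction that $p\mid t$. As $p$ is odd, this gives $2p\mid 2t$, whence $\phi(2p)=p-1$ divides $\phi(2t)$ and so $p-1\leq\phi(2t)\leq 2g$, i.e.\ $p\leq 2g+1$, contradicting the hypothesis. Therefore $p\nmid t$, condition (a)(ii) is satisfied, and $\theta$ lies in the full degree case.

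It then remains only to read off the conclusion. With $\theta$ in the full degree case, its minimal polynomial is $(\sqrt{q^*})^{\phi(4t)}\Phi_{4t}(X/\sqrt{q^*})$, and since $4\mid 4t$, Theorem~\ref{thm:cyclo} yields $\Phi_{4t}(-X)=\Phi_{4t}(X)$. Replacing $X$ by $-X$ in this expression and invoking that identity leaves it unchanged, so the minimal polynomial of $\theta$ is even, as claimed. I note that the degree bound in the third paragraph could alternatively be obtained by appealing to the fact that in the half degree case the minimal polynomial has degree $\phi(4t)/2$, but the cyclotomic-subfield argument above avoids relying on the explicit half degree description.
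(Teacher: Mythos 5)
Your proof is correct and follows the same overall strategy as the paper: reduce to showing that $p>2g+1$ forces the full degree case of Theorem~\ref{thm:singh}, then invoke Theorem~\ref{thm:cyclo} with $4\mid 4t$. The one genuine difference is how you obtain the key inequality. The paper assumes the half degree case holds, extracts $p\mid t$ and $2\nmid t$ from the failure of conditions (a)(i)--(iii), and then uses the degree formula $\deg(\Psi)=\tfrac{1}{2}\phi(4t)=\phi(t)$ from \cite[Remark~3.4]{singh} to conclude $p-1\le\phi(t)\le 2g$. You instead rule out $p\mid t$ directly via the containment $\QQ(\zeta_{2t})=\QQ(\theta^2/q^*)\subseteq\QQ(\theta)$, which gives $\phi(2t)\le[\QQ(\theta):\QQ]\le 2g$ without any knowledge of the exact degree of the minimal polynomial in the half degree case. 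This makes your argument marginally more self-contained (it needs only Theorem~\ref{thm:singh} itself, not the supplementary degree computation from \textit{op.\ cit.}), at no cost in length; both routes arrive at $p-1\le 2g$ and the same contradiction.
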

\begin{proof}
By Theorems~\ref{thm:cyclo} and \ref{thm:singh}, it is enough to show that the half degree case does not arise when $p>2g+1$.

Suppose that $\theta=\sqrt{q^*}\times\zeta_{4t}$ is a root of $P_A(X)$ such that the half degree case occurs. Write $\Psi(X)$ for the minimal polynomial of $\theta$ over $\QQ$. Then, $\Psi(X)|P_A(X)$.  In particular, 
\begin{equation}\label{eq:deg}
\deg(\Psi)\le\deg(P_A)=2g.
\end{equation}
 As $p>2g+1$, $p$ is an odd prime. Furthermore, since we are in the half degree case, we have $p|t$ and $2\nmid t$. This implies that 
\begin{align}
p-1|\phi(t);\label{eq:divide}\\
 \phi(4t)=2\phi(t),\label{eq:phi} 
\end{align}  on applying the formula $\phi(p_1^{n_1}\cdots p_r^{n_r})=p_1^{n_1-1}(p_1-1)\times\cdots \times p_r^{n_r-1}(p_r-1) $ for distinct primes $p_1,\ldots p_r$.

Recall from \cite[Remark~3.4]{singh} that $\deg(\Psi)=\frac{1}{2}\phi(4t)$. Therefore, on combining \eqref{eq:deg}, \eqref{eq:divide} and \eqref{eq:phi}, we deduce that
\[
p-1\le \phi(t)\le 2g.
\] 
Hence the result.
\end{proof}

This allows us to conclude the proof of Theorem~\ref{thm:main}. Since all roots of $P_A(X)$ are of the form $\sqrt{q}\times\zeta$ by Theorem~\ref{thm:maninoort}, $P_A(X)$ is a product of minimal polynomials of such elements. When $p>2g+1$, Lemma~\ref{lem:exceptional} says that all such minimal polynomials are even. Hence, $P_A(X)$ itself has to be even.

\section{Implications for Iwasawa theory}\label{iwasawa}

Let $E/\QQ$ be an elliptic curve with good  reduction at $p$. Let $K_\infty$ be the $\Zp$-cyclotomic extension of $\QQ$. We write $\Gamma=\Gal(K_\infty/K)$, $\Lambda=\Zp[[\Gamma]]$ and $K_n=K_\infty^{\Gamma^{p^n}}$ for $n\ge1$. Let $\Sel_p(E/K_\infty)$ denote the $p$-Selmer group of $E$ over $K_\infty$. It is well known in Iwasawa theory that the $\Lambda$-corank of $\Sel_p(E/K_\infty)$ is $0$ or $1$ depending on whether $E$ has ordinary or supersingular reduction at $p$.  In the former case, we may formulate a main conjecture relating the characteristic ideal of the dual Selmer group to a $p$-adic $L$-function. This conjecture is in fact now a theorem proved by Kato \cite{kato} and Skinner-Urban \cite{SU}.

 In the supersingular case, Kobayashi \cite{kobayashi} and Sprung \cite{sprung} have defined two new Selmer groups $\Sel_p^\pm(E/K_\infty)$, which we shall call plus and minus Selmer groups. These new Selmer groups turn out to have $\Lambda$-corank $0$. It is then possible to formulate a main conjecture as in the ordinary case, relating their characteristic ideals to some $p$-adic $L$-functions. One inclusion of these conjectures have been proved in \textit{op. cit.}. Recently, Wan and Sprung have announced the proof of the other inclusion of these conjectures in \cite{wan} and \cite{sprung2} respectively.

One common feature in the construction of the plus and minus Selmer groups in the works of Kobayashi and Sprung is a family of points on the elliptic curve $c_n\in E(K_{n,p}) $, where $K_{n,p}$ is the completion of $K_n$ at the unique prime above $p$. For all $n\ge 1$, we have the trace relation
\begin{equation}\label{eq:trace}
\Tr_{n+1/n}(c_{n+1})=a\times c_n-c_{n-1},
\end{equation}
where $a$ is the trace of the $p$-Frobenius on the Tate module $T_\ell(E)$ for some good prime  $\ell\ne p$.

When $a=0$, the simplicity of \eqref{eq:trace} allowed Kobayashi to give a very explicit description of his plus and minus Selmer groups. More precisely, we may define $\Sel_p^\pm(E/K_\infty)$ as the direct limit of $\Sel_p^\pm(E/K_n)$, which are defined by replacing the local condition at $p$ of the usual Selmer group  by $E^\pm(K_{n,p})\otimes\Qp/\Zp$,
where $E^\pm(K_{n,p})$ are given by
\begin{align*}
E^+(K_{n,p})&:=\{P\in E(K_{n,p}):\Tr_{n/m+1}P\in E(K_{m,p})\text{ for all even $m<n$} \};\\
E^-(K_{n,p})&:=\{P\in E(K_{n,p}):\Tr_{n/m+1}P\in E(K_{m,p})\text{ for all odd $m<n$} \}.
\end{align*}
Furthermore, it is shown in \cite[\S8]{kobayashi} that $E^\pm(K_{n,p})$ are generated by the Galois conjugates of the family of points $(c_m)_{m\le n}$ and that there is a short exact sequence relating these subgroups to the original elliptic curve, namely
\begin{equation}\label{eq:ses}
0\rightarrow E(\Qp)\rightarrow E^+(K_{n,p})\oplus E^-(K_{n,p})\rightarrow E(K_{n,p})\rightarrow 0.
\end{equation}
This was used in \textit{op. cit.} to prove a control theorem for the plus and minus Selmer groups.

When $a\ne0$, the Selmer groups defined in \cite{sprung} are much less explicit. So far, we do not know how to define an analogue of $E^\pm(K_{n,p})$ nor do we know whether a short exact sequence similar to \eqref{eq:ses} exists. In other words, the condition $a=0$ in Kobayashi's work plays an indispensable role. It is therefore natural to ask whether Theorem~\ref{thm:main} would allow us to better understand the supersingular Iwasawa theory of abelian varieties.

We recall that a multi-signed main conjecture for abelian varieties with supersingular reduction has been formulated in \cite{BL0}, generalizing the results in \cite{kobayashi,sprung}. Furthermore, this conjecture has been proved in \cite{BL1} under the hypothesis that certain Rubin-Stark elements from \cite{rubin1,rubin2} exist. The main conjecture in \cite{BL0} relies on the existence of a logarithmic matrix that is used to decompose Perrin-Riou's (conjectural) $p$-adic $L$-function from \cite{PR95} and to define the appropriate signed Selmer groups. It is therefore useful to have an explicit description of this logarithmic matrix.

The aforementioned logarithmic matrix is defined with respect to a chosen basis for the Dieudonné module $\Dcris$ of the $p$-adic Tate module of $A$. Let $v_1,\ldots,v_{2g}$ be a basis of $\Dcris$ that respects the filtration, that is, $v_1,\ldots,v_g$ are contained in $\Fil^0\Dcris$. Suppose $\vp$ is the Frobenius action on $\Dcris\otimes\Qp$, then its matrix with respect to such a basis is of the form
\begin{equation}
C_\vp=C\times\left(
\begin{array}{c|c}
I_{g}&0\\ \hline
0&\frac{1}{p}I_{g}
\end{array}
\right),\label{eq:matrix}
\end{equation}
where $C\in {\rm GL}_{2g}(\Zp)$ and $I_g$ is the $g\times g$ identity matrix. The logarithmic matrix is defined to be
\[
\lim_{n\rightarrow\infty}C_1\cdots C_n\cdot C_{\vp}^{-{n+1}},
\]
where $C_i$ for $i=1,\ldots, n$ is the matrix given by
\[
\left(
\begin{array}{c|c}
I_{g}&0\\ \hline
0&\Phi_{p^i}(X)I_{g}
\end{array}
\right)\times C^{-1}.
\]

The characteristic polynomial of $\vp$ is $X^{2g}P_A(X^{-1})$. In particular, Theorem~\ref{thm:main} tells us that if $p>2g+1$, then this is an even polynomial. We hope that this would allow us to give a more explicit description of the logarithmic matrix on choosing an appropriate basis for the Dieudonn\'e module. Indeed, when $g=1$, we may choose our basis so that $C=\begin{pmatrix}
0&-1\\
1&0
\end{pmatrix}$. Consequently, the logarithmic matrix is given by Pollack's plus and minus logarithm in \cite{pollack}. Furthermore, as described in \cite{ota}, it is possible to define a family of points $c_n\in A(K_{n,p})$ that satisfy a trace relation described by the coefficients of $P_A(X)$, similar to \eqref{eq:trace}. Therefore, when $P_A(X)$ is an even polynomial, this trace relation greatly simplifies. In future work, we shall explore the possibility of describing the signed Selmer groups of \cite{BL0} explicitly in this setting,  which would generalize the results of Kobayashi for elliptic curves that we described above.

\appendix
\section{Bounds involving binomial coefficients}

Let
\[
P_A(X)=X^{2g}+a_1X^{2g-1}+\cdots +a_{g-1}X^{g+1}+a_gX^g+qa_{g-1}X^{g-1}+\cdots +q^{g-1}a_1X+q^g
\]
be as in the main part of the article, where $q=p^n$ for some positive odd integer $n$. The goal of this appendix is to prove the following lemma.

\begin{lem}\label{lem:binom}
Let $k\in\{1,\ldots,g\}$ be an odd integer. If $a_k\ne 0$, then $p\le\binom{2g}{k}^2$.
\end{lem}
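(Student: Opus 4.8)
The plan is to establish the two properties advertised in the introduction, namely the generalized Weil bound $|a_k|\le\binom{2g}{k}q^{k/2}$ and the $p$-adic valuation bound $\ord_p(a_k)\ge nk/2$, and then combine them exactly as in the elliptic curve argument. So first I would set up notation: write the roots of $P_A(X)$ as $\theta_1,\ldots,\theta_{2g}$, so that by Theorem~\ref{thm:maninoort} each $\theta_j=\sqrt{q}\,\zeta_j$ for some root of unity $\zeta_j$, and in particular $|\theta_j|=\sqrt{q}=p^{n/2}$ for every $j$. Since $a_k$ is (up to sign) the $k$-th elementary symmetric polynomial $e_k(\theta_1,\ldots,\theta_{2g})$, I would bound it by the triangle inequality: $|a_k|=|e_k(\theta_1,\ldots,\theta_{2g})|\le\binom{2g}{k}(\sqrt{q})^k=\binom{2g}{k}q^{k/2}$, the binomial coefficient counting the number of $k$-element products and each such product having absolute value exactly $q^{k/2}$. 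This gives property (1).

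Next I would prove the valuation bound (2). The cleanest route is via the symmetry of the polynomial: the functional equation $P_A(X)=X^{2g}q^{-g}P_A(q/X)$ (equivalently the fact that the roots come in pairs $\theta,q/\theta$) is already encoded in the given palindromic-type shape of $P_A$, where the coefficient of $X^{g-k}$ is $q^k a_k$ and of $X^{g+k}$ is $a_k$. I would then invoke the integrality $a_k\in\ZZ$ together with the Newton-polygon / slope information coming from supersingularity: all roots $\theta_j$ have the same $p$-adic valuation, and since $\theta_j\cdot\overline{\theta_j}$-type products give $q=p^n$ and all $2g$ roots have equal archimedean absolute value $p^{n/2}$, supersingularity forces each root to have $p$-adic valuation exactly $n/2$ (this is precisely what supersingular means here — the Newton polygon is a single segment of slope $1/2$). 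Consequently $\ord_p(e_k(\theta_1,\ldots,\theta_{2g}))\ge k\cdot(n/2)$, giving $\ord_p(a_k)\ge nk/2$. Property (2).

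Finally I would combine the two bounds for odd $k$. Suppose $a_k\ne0$. Because $a_k$ is a nonzero integer and $\ord_p(a_k)\ge nk/2$ with $k$ odd and $n$ odd, the quantity $nk/2$ is not an integer; rounding up, $\ord_p(a_k)\ge\lceil nk/2\rceil=(nk+1)/2$, so $|a_k|\ge p^{(nk+1)/2}$. Comparing with $|a_k|\le\binom{2g}{k}q^{k/2}=\binom{2g}{k}p^{nk/2}$ yields $p^{(nk+1)/2}\le\binom{2g}{k}p^{nk/2}$, hence $p^{1/2}\le\binom{2g}{k}$, i.e. $p\le\binom{2g}{k}^2$, as claimed.

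The step I expect to require the most care is the valuation bound (2), specifically the passage from ``$a_k$ is a nonzero integer with half-integer $p$-adic valuation'' to ``$|a_k|\ge p^{\lceil nk/2\rceil}$''. The parity interplay between $n$ odd and $k$ odd is exactly what makes $nk/2$ a genuine half-integer and forces the valuation to jump to the next integer; this is the mechanism generalizing ``$p\mid a\Rightarrow|a|\ge p$'' for supersingular elliptic curves. I would need to argue cleanly that supersingularity pins down the slope of the Newton polygon to be $1/2$ uniformly, rather than merely giving a lower bound that could a priori be diluted across the roots, and that the elementary-symmetric expression inherits the expected valuation without cancellation lowering it below the Newton-polygon prediction.
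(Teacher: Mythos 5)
Your proof is correct and follows essentially the same route as the paper: the archimedean bound via the triangle inequality applied to the elementary symmetric functions of the roots $\sqrt{q}\,\zeta_j$, and the valuation bound $\ord_p(a_k)\ge\lceil nk/2\rceil$ obtained from integrality of $a_k$ together with the parity of $nk$. The only cosmetic difference is that the paper gets the valuation $n/2$ of each root directly from the Manin--Oort form $\sqrt{q}\,\zeta$ (roots of unity being $p$-adic units) rather than from the Newton polygon, and the cancellation issue you flag is settled by the ultrametric inequality, which can only raise the valuation of a sum above the minimum of its terms.
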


Consequently, we obtain a weaker version of Theorem~\ref{thm:main}.

\begin{corollary}
If $p> \binom{2g}{g}^2$ (when $g$ is odd) or  $p>\binom{2g}{g-1}^2$ (when $g$ is even), then $P_A(X)$ is an even polynomial.
\end{corollary}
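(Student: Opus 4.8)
The plan is to deduce the Corollary directly from Lemma~\ref{lem:binom} by passing to contrapositives and, for each parity of $g$, identifying the largest binomial coefficient $\binom{2g}{k}$ that occurs as $k$ runs over the odd integers in $\{1,\ldots,g\}$. The genuine content of the appendix lives in Lemma~\ref{lem:binom}; the Corollary is a bookkeeping consequence.

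First I would recall that $P_A(X)=P_A(-X)$ if and only if every coefficient attached to an odd power of $X$ vanishes, and then read these coefficients off the explicit shape of $P_A(X)$. With the convention $a_0=1$, the coefficient of $X^{2g-i}$ equals $a_i$ for $0\le i\le g$, while the coefficient of $X^{g-i}$ equals $q^i a_{g-i}$ for $1\le i\le g$. Since $2g$ is even, a monomial $X^m$ has odd degree precisely when the relevant index is odd, so in every case the coefficient of an odd power of $X$ is, up to a nonzero factor $q^j$, one of the integers $a_k$ with $k$ odd and $1\le k\le g$. Hence it suffices to show that all such $a_k$ vanish under the hypothesis on $p$.

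Next I would invoke the contrapositive of Lemma~\ref{lem:binom}: if $p>\binom{2g}{k}^2$ for an odd $k\in\{1,\ldots,g\}$, then $a_k=0$. The key elementary observation is that $\binom{2g}{k}$ is strictly increasing for $0\le k\le g$ (the ratio $\binom{2g}{k}/\binom{2g}{k-1}=(2g-k+1)/k$ exceeds $1$ exactly when $k\le g$), so its maximum over the odd $k$ in $\{1,\ldots,g\}$ is attained at the largest such odd index. When $g$ is odd this index is $k=g$, giving the bound $\binom{2g}{g}$; when $g$ is even it is $k=g-1$, giving $\binom{2g}{g-1}$. Consequently the stated hypothesis $p>\binom{2g}{g}^2$ (resp. $p>\binom{2g}{g-1}^2$) guarantees $p>\binom{2g}{k}^2$ simultaneously for \emph{every} odd $k\in\{1,\ldots,g\}$, whence $a_k=0$ for all of them.

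Combining the two steps, all coefficients of odd powers of $X$ in $P_A(X)$ vanish, so $P_A(X)=P_A(-X)$ and $P_A$ is even. There is no serious obstacle here; the only points demanding care are the bookkeeping that forcing the odd-indexed $a_k$ to vanish really annihilates every odd-degree coefficient of $P_A$ (which uses the palindromic-up-to-powers-of-$q$ structure of $P_A$), and the selection of the extremal binomial coefficient according to the parity of $g$ — precisely the reason the two cases appear in the statement.
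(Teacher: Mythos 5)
Your proposal is correct and follows essentially the same route as the paper, which deduces the corollary from Lemma~\ref{lem:binom} via the single observation that $\binom{2g}{k}\le\binom{2g}{r}$ for $k\le r\le g$, so that the extremal odd index is $g$ or $g-1$ according to parity. Your additional bookkeeping verifying that the vanishing of the odd-indexed $a_k$ kills every odd-degree coefficient of the palindromic polynomial $P_A(X)$ is accurate and is left implicit in the paper.
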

\begin{proof}
This is because $\binom{2g}{k}\le\binom{2g}{r}$ for all $k\le r\le g$.
\end{proof}

We now prove Lemma~\ref{lem:binom}. Recall from Theorem~\ref{thm:maninoort} that the roots of $P_A(X)$ are of the form $\sqrt{q}\times\zeta_i$ for $i=1,\ldots,2g$, where $\zeta_i$ are some roots of unity. Vieta's formulas gives a link between $a_{k}$ and the roots of $P_{A}$. Namely,
\[
a_{k}=(-1)^{k}\sum_{1\le i_{1}< i_{2}< ...< i_{k} \le 2g}\left(\prod_{1\leq j\leq k}\zeta_{i_{j}}\right)p^{\frac{kn}{2}}.
\]
Since $a_k\in\ZZ$, we have $\ord_{p}(a_{k})\geq \left\lceil\frac{kn}{2}\right\rceil$. As we assume that $a_k\ne0$ and $kn$ is odd, this implies that
\[
|a_k|\ge p^{\frac{kn+1}{2}}.
\]
On the other hand, by the triangular inequality we can see that 
\begin{align*}
|a_{k}|&\leq\sum_{1\le i_{1}<i_{2}< ...< i_{k} \le 2g}\left(\prod_{1\leq j\leq k}|\zeta_{i_{j}}|\right){p}^{\frac{kn}{2}} \\
&=\binom{2g}{k}{p}^{\frac{nk}{2}}.
\end{align*}
On combining these two inequalities together, we deduce that
\[
\binom{2g}{k}p^{\frac{kn}{2}} \geq |a_{k}| \geq p^{\frac{kn+1}{2}} 
\]
and hence $p\leq \binom{2g}{k}^{2}$ as required.

\bibliographystyle{amsalpha}
\bibliography{references}

\end{document}